\newtheorem{theorem}{Theorem}
\newtheorem{proposition}{Proposition}
\newtheorem{remark}{Remark}
\newtheorem{definition}{Definition}
\def\min{\operatorname{Minimize}}
\begin{document}

\title{How to project onto extended second order cones}

\author{
 O. P. Ferreira\thanks{IME/UFG, Avenida Esperança, s/n, Campus Samambaia,  Goi\^ania, GO, 74690-900, Brazil (e-mail:{\tt
orizon@ufg.br}).  The author was supported in part by FAPEG, CNPq Grant  305158/2014-7 and PRONEX--Optimization(FAPERJ/CNPq).}  
\and
S. Z. N\'emeth \thanks{School of Mathematics, University of Birmingham, Watson Building, Edgbaston, Birmingham B15 2TT, United Kingdom
(e-mail:{\tt s.nemeth@bham.ac.uk}).} 
}

\maketitle

\begin{abstract}
 The extended second order cones were introduced by S. Z. N\'emeth and G. Zhang in [S. Z. N\'emeth and G. Zhang. {\it Extended Lorentz cones and variational inequalities on cylinders}. J. Optim. Theory Appl., 168(3):756-768, 2016]  for solving mixed complementarity problems and variational inequalities on cylinders. R. Sznajder in [R. Sznajder. {\it The Lyapunov rank of extended second order cones}. Journal of Global Optimization, 66(3):585-593, 2016] determined the automorphism groups and the Lyapunov or bilinearity ranks of these cones. S. Z. N\'emeth and G. Zhang  in [S.Z. N\'emeth and G. Zhang. {\it Positive operators of Extended Lorentz cones}. arXiv:1608.07455v2,2016]  found both necessary conditions and sufficient conditions for a linear operator to be a positive operator of an extended second order cone. In this  note we give formulas for projecting onto the extended second order cones. In the most general case the formula depends  on a  piecewise linear equation for one real variable which  is  solved by using numerical methods.
 
\noindent
{\bf Keywords:} { Semi-smooth equation,   extended second order cone,  metric projection,  piecewise linear Newton method}
\end{abstract}

\section{Introduction}

The Lorentz cone is an important object in theoretical physics. In recent times it has been rebranded as second order cone and used for various 
application
in optimization. Some robust optimization, plant location and investment portfolio manangement problems were formulated as as a second order cone
program \cite{AG2003}. Another good survey paper with a wide range of applications of second order cone programming is \cite{MR1655138}. More
recent connections of second order cone programming and second order cone complementarity problem with physics, mechanics, economics, game theory,
robotics, optimization and neural networks were considered in
\cite{MR3158056,MR2377196,MR2116450,MR3010551,MR2925039,MR2568432,MR2522815,KCY2011,MR2179239}. The importance of the second order cone   is nowadays notorious not only in theoretical physics, but in optimization as well. 

Thus far, there is no closed-form expression for metric (orthogonal) projection onto a general closed convex cone.  A nice property of the second order cone is that it admits an explicit representation of the projection mapping onto it (see 
\cite[Proposition 3.3]{MR1885570}). The
original motivation for extending the second order cone was inspired by using iterative methods for solving complementarity problems and variational
inequalities \cite{NZ20151,NZ2016a}. These iterative methods are based on the property that the projection onto the closed convex set defining 
the problem is isotone with respect to the order defined out by a cone. Usually this is a very restrictive condition. However, cylinders and in
particular cylinders with cone base admit isotone projections onto them with respect to the extended second order cones. Therefore, variational
inequalities on cylinders and mixed complementarity problems can be solved by using such iterative techniques based on monotone convergence
\cite{NZ20151,NZ2016b}. 

Later it turned out that many of these cones could be even more useful because the bilinearity rank (or Lyapunov rank)
\cite{RudolfNoyanPappAlizadeh2011,GowdaTao2014,GowdaTrott2014,Trott2014,OrlitzkyGowda2016} of  them  is higher than the dimension of the
underlying space and therefore they have good numerical properties. More specifically, for $p>1$ this is true whenever $q^2-3q+2>2p$
\cite{RS2016},  where $p$, $q$ are from the definition of the extended second order cone (see Definition \ref{def-ext-lor}). Such cones are
``numerically good'' cones when solving complementarity problems defined on them. The extended second order cones are also irreducible
\cite{RS2016}. But to be really usable from optimization point of view we need easy ways of projecting onto them.  In this  paper we show that
projecting onto an extended second order cone it is ``almost possible''  by using closed-form  expressions. We present a set of formulas for
projecting onto an extended second order cone which is  subject to solving a piecewise linear equation with one real variable only. The method of
finding these expressions is based on the special form of the complementarity set of the extended second order cone and Moreau's decomposition
theorem \cite{MR0139919} for projecting onto cones. The latter problem of projecting onto the extended second order cone is a particular conic
optimization problem with respect to this cone. Although, the problem of projecting the point $(x,u)\in\mathbb R^p\times\mathbb R^q$ into the
extended second order cone $L$ (see Definition \ref{def-ext-lor}) can be transformed into the second order conic optimization problem
\[
\min \left\{ \|y-x\|^2+\|v-u\|^2~:~ (y,v)\in\mathbb R^p\times\mathbb R^q, ~ ~ \ell_i(y,v)\in\mathcal{L},\quad i=1,\ldots,p\right\}, 
\]
where $\mathcal{L}=\{(t,u)\in\mathbb R\times\mathbb R^q:t\ge\|u\|\}$ is the second order cone in $\mathbb R^{q+1}\equiv\mathbb R\times\mathbb R^q$
and $\ell_i:\mathbb R^p\times\mathbb R^q\to\mathbb R\times\mathbb R^q$ are the linear mappings defined by $\ell_i(y,v)=(y_i,v)$, the
complexity of our method is much simpler than solving the reformulated problem, because apart from closed-form expressions, it contains only one
piecewise linear equation. By considering such a reformulation one would lose the useful special structure of the cone, which is the cornerstone
for the simplicity of our method.

Certainly, the explicit representation of the projection mapping onto the second order cone (see \mbox{
\cite[Proposition 3.3]{MR1885570}}\hspace{0pt}) should not be 
handled as a conic optimization problem and the need to solve a simple piecewise linear equation for $p>1$ makes our method just slightly more complex. The above observation about why one shouldn't  reformulate the projection onto the extended second order cone into a second order conic
optimization problem, together with the irreducibility of the second order cone, clearly shows that this cone ``deserves a closer look''.

The structure of the paper is as follows: In Section 2 we fix the notation and the terminology used throughout the paper. In Section 3 we present the formulas for projecting onto the
extended second order cone. In Section 4 we solve the piecewise linear equation involved in these formulas by using the semi-smooth Newton's  method and  a method based on Picard's iteration. Finally, we make some remarks in the last section. 
\section{Preliminaries}
Let $\ell,m,p,q$ be positive integers such that $m=p+q$. We identify the the vectors of ${\mathbb R}^\ell$ with $\ell\times 1$ matrices with real 
entries. The scalar product in ${\mathbb R}^\ell$ is defined by the mapping \[{\mathbb R}^\ell\times{\mathbb R}^\ell\ni (x,y)\mapsto\langle x,y \rangle:=x^\top y\in{\mathbb R}\] and the
corresponding norm by
\[{\mathbb R}^\ell\ni x\mapsto\|x\|:=\sqrt{\langle x,x\rangle}\in{\mathbb R}.\] For $x,y\in{\mathbb R}^\ell$ denote $x\perp y$ if $\langle x,y\rangle=0$.
We identify the elements of
${\mathbb R}^p\times{\mathbb R}^q$ with the elements of ${\mathbb R}^m$ through the correspondence \[{\mathbb R}^p\times{\mathbb R}^q\ni(x,y)\mapsto (x^\top,y^\top)^\top.\] Through this
identification the scalar product in ${\mathbb R}^p\times{\mathbb R}^q$ is defined by 
\[\langle (x,y),(u,v)\rangle:=\langle (x^\top,y^\top)^\top,(u^\top,v^\top)^\top\rangle=\langle x,u\rangle+\langle y,v\rangle.\] A closed set $K\subset{\mathbb R}^\ell$ with nonempty interior is
called a \emph{proper cone} if $K+K\subset K$, $K\cap(-K)=\{0\}$ and $\lambda K\subset K$, for any $\lambda$ positive real number. The \emph{dual
cone} of a proper cone $K\subset{\mathbb R}^\ell$ is a proper cone defined by \[K^*:=\{x\in{\mathbb R}^\ell~:~\langle x,y\rangle\ge0,\mbox{ }\forall y\in K\}.\] A proper cone 
$K\subset{\mathbb R}^\ell$ is called subdual if $K\subset K^*$, superdual if $K^*\subset K$ and self-dual if $K^*=K$. If $K,D\subset{\mathbb R}^\ell$ are proper cones
such that $D=K^*$, then $D^*=K$ and the cones $K$, $D$ are called \emph{mutually dual}.

For a proper cone $K\in{\mathbb R}^\ell$ denote 
\[C(K):=\left\{(x,y)\in K\times K^*~:~x\perp y\right\}\] the \emph{complementarity set} of $K$.

Let $C\in{\mathbb R}^\ell$ be a closed convex set. The projection mapping $P_C\colon{\mathbb R}^\ell\to{\mathbb R}^\ell$ onto $C$ is the mapping defined by 
\[P_C(x):=\mbox{argmin}\{\|x-y\|:y\in C\}.\] 
We recall here Moreau's decomposition Theorem \cite{MR0139919} (stated here for proper cones only):
\begin{theorem} \label{th:mt}
	Let $K\subset{\mathbb R}^\ell$ be a proper cone, $K^*$ its dual cone and $z\in{\mathbb R}^\ell$. Then, the following two statements are equivalent:
	\begin{enumerate}
		\item[(i)] $z=x-y$ and $(x,y)\in C(K)$,
		\item[(ii)] $x=P_K(z)$ and $y=P_{K^*}(-z)$.
	\end{enumerate}
\end{theorem}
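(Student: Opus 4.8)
The plan is to reduce everything to the variational characterization of the metric projection onto a closed convex cone. For a proper cone $C$ and points $z,p\in{\mathbb R}^\ell$ I would first establish the working lemma that $p=P_C(z)$ holds if and only if
\[
p\in C,\qquad p-z\in C^*,\qquad \langle p,\,p-z\rangle=0.
\]
This I would derive from the standard obtuse-angle criterion for projection onto a convex set, namely $p=P_C(z)\iff p\in C$ and $\langle z-p,\,c-p\rangle\le 0$ for all $c\in C$. Since $C$ is a cone, inserting $c=0$ and $c=2p$ forces $\langle z-p,\,p\rangle=0$, and what remains, $\langle z-p,\,c\rangle\le0$ for every $c\in C$, says exactly that $p-z\in C^*$. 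This lemma, stated purely in terms of the dual cone already introduced in the Preliminaries, is the engine of the whole argument.

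For the implication (i)$\Rightarrow$(ii) I would simply feed the hypothesis into this lemma. Assume $z=x-y$ with $x\in K$, $y\in K^*$ and $\langle x,y\rangle=0$. To see $x=P_K(z)$ I check the three conditions with $C=K$: membership $x\in K$ is given, $x-z=y\in K^*$, and $\langle x,\,x-z\rangle=\langle x,y\rangle=0$. To see $y=P_{K^*}(-z)$ I apply the lemma with $C=K^*$ and the point $-z$: here $y\in K^*$ is given, $y-(-z)=y+z=x\in K\subseteq (K^*)^*$, and $\langle y,\,y+z\rangle=\langle y,x\rangle=0$. This uses only the trivial inclusion $K\subseteq(K^*)^*$ (equality being available from the duality facts recalled in the Preliminaries).

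For (ii)$\Rightarrow$(i) I would exploit the single-valuedness of the projection. From $x=P_K(z)$ the ``only if'' part of the lemma yields $x\in K$, $x-z\in K^*$ and $\langle x,\,x-z\rangle=0$; setting $y':=x-z$ this says precisely that $(x,y')\in C(K)$ and $z=x-y'$, i.e. statement (i) holds for the pair $(x,y')$. Applying the already-proved implication (i)$\Rightarrow$(ii) to this pair gives $y'=P_{K^*}(-z)$. Since the projection onto a nonempty closed convex set is unique and $y=P_{K^*}(-z)$ by hypothesis, I conclude $y=y'$, so $z=x-y$ and $(x,y)\in C(K)$, which is (i).

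The only genuinely delicate point is bookkeeping the signs and the dual/bidual passages correctly---in particular making sure the conversion $\langle z-p,c\rangle\le0\ \forall c\in C\Leftrightarrow p-z\in C^*$ is applied on the correct side when the cone is $K^*$ rather than $K$. Everything else is routine once the projection lemma is in place: the decomposition is then essentially a formal substitution, and uniqueness of the projection closes the reverse direction.
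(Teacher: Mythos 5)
Your proof is correct, but there is nothing in the paper to compare it against: Theorem~\ref{th:mt} is not proved there, it is simply recalled from Moreau's 1962 note \cite{MR0139919}, with only the consequence $z=P_K(z)-P_{K^*}(-z)$, $P_K(z)\perp P_{K^*}(-z)$ drawn from it. Judged on its own merits, your argument is the standard (essentially Moreau's) proof and it is complete. The working lemma --- $p=P_C(z)$ iff $p\in C$, $p-z\in C^*$ and $\langle p,p-z\rangle=0$ --- is derived correctly from the obtuse-angle criterion; the substitutions $c=0$ and $c=2p$ are legitimate because a proper cone in the paper's sense contains $0$ (from $K\cap(-K)=\{0\}$, or from closedness and $\lambda K\subset K$) and is closed under doubling. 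In (i)$\Rightarrow$(ii) the only nontrivial ingredient is the inclusion $K\subseteq(K^*)^*$, which is immediate from the definition of the dual cone, so you correctly avoid needing the deeper bidual equality $(K^*)^*=K$. Your reverse direction is a nice economy: rather than verifying the lemma's three conditions a second time for $y=P_{K^*}(-z)$ over the cone $K^*$, you manufacture $y'=x-z$ from the conditions for $x=P_K(z)$ alone, feed the pair $(x,y')$ through the already-proved forward implication, and use single-valuedness of the projection to conclude $y=y'$. This makes the hypothesis on $y$ do exactly one job --- pinning down $y=y'$ --- and keeps all dual-cone bookkeeping on the $K$ side, which is where sign errors would otherwise creep in.
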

Theorem~\ref{th:mt} implies  \[z=P_K(z)-P_{K^*}(-z),\] with $P_K(z)\perp P_{K^*}(-z)$.

For  $z\in{\mathbb R}^\ell$ we denote  $z=(z_1,\dots,z_\ell)^\top$. Let $\ge$ denote the component-wise order in 
${\mathbb R}^\ell$, that is, the order defined by \({\mathbb R}^\ell\ni x\ge y\in{\mathbb R}^\ell\) if and only if $x_i\ge y_i$ for $i=1,\dots,\ell$.  Denote by $0$ the vector in ${\mathbb R}^\ell$ or a scalar zero (it will not lead to any confusion), by $e$ the vector  of ones in ${\mathbb R}^\ell$  and by ${\mathbb R}^\ell_+=\{x\in{\mathbb R}^\ell~:~ x\ge0\}$ the nonnegative orthant. The proper cone ${\mathbb R}^\ell_+$ is self-dual. For a real number $\alpha\in{\mathbb R}$ denote $\alpha^+:=\max(\alpha,0)$ and $\alpha^-:=\max(-\alpha,0)$. 
For a vector $z\in{\mathbb R}^\ell$ denote $z^+:=(z_1^+,\dots,z_\ell^+)$, $z^-:=(z_1^-,\dots,z_\ell^-)$, $|z|:=(|z_1|,\dots,|z_\ell|)$,
$\mbox{sgn}(z):=(\mbox{sgn}(z_1),\dots,\mbox{sgn}(z_\ell))$
and $\mbox{diag}(z)$ the $\ell\times\ell$ diagonal matrix with entries $\mbox{diag}(z)_{ij}:=\delta_{ij}z_i$, where $i,j\in\{1,\dots,\ell\}$. It is known that $z^+=P_{{\mathbb R}^\ell_+}(z)$ and  $z^-=P_{{\mathbb R}^\ell_+}(-z)$. 

We recall from \cite{NZ20151} the following definition of a pair of mutually dual extended second order cones $L$, $M$:
\begin{definition}\label{def-ext-lor}
\begin{align*}
L&:=\left\{(x,u)\in{\mathbb R}^p\times{\mathbb R}^q:x\ge\|u\|e\right\}, \\
M&:=\left\{(x,u)\in{\mathbb R}^p\times{\mathbb R}^q:\langle x,e\rangle\ge\|u\|,x\ge0\right\}.
\end{align*}
where $\ge$ denotes the component-wise order. 
\end{definition}

It is known that both $L$ and $M$ are
proper cones, $L$ is subdual $M$ is superdual and if $p=1$, then both cones reduce to the second order cone. The cones $L$ and $M$ are polyhedral if
and only if $q=1$. If we allow $q=0$ as well, then the cones $L$ and $M$ reduce to the nonnegative orthant. More properties of the extended
second order cones can be found in \cite{NZ20151,RS2016,NZ2016b}.
\section{Projection formulas for extended second order cones}
In this section we give formulas for projecting onto the pair of mutually dual extended second order cones.  Before presenting our main  theorem, we need some preliminary results for these cones. Let $p,q$  be positive integers.
\begin{proposition}\label{pm}
	Let $x,y\in{\mathbb R}^p$ and $u,v\in{\mathbb R}^q\setminus\{0\}$. We have that $(x,u,y,v):=((x,u),(y,v))\in C(L)$ if and only if there exists a $\lambda>0$
	such that $v=-\lambda u$, $\langle y,e\rangle=\|v\|$ and $(x-\|u\|e,y)\in C({\mathbb R}^p_+)$.
\end{proposition}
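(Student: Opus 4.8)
**The plan is to unpack the definition of the complementarity set $C(L)$ and translate each condition into the claimed statement.** By definition, $(x,u,y,v)\in C(L)$ means three things simultaneously: $(x,u)\in L$, $(y,v)\in L^*$, and $(x,u)\perp(y,v)$. The strategy is to handle the orthogonality condition first, since it is a single scalar equation that couples all four vectors, and then see what the membership conditions in $L$ and $L^*$ contribute once orthogonality is exploited.

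First I would write out orthogonality explicitly as $\langle x,y\rangle+\langle u,v\rangle=0$. Since $(x,u)\in L$ gives $x\ge\|u\|e\ge 0$ componentwise, and I expect the dual-cone membership $(y,v)\in L^*$ to force $y\ge 0$ as well, the term $\langle x,y\rangle$ is a sum of nonnegative products. The key move is the Cauchy--Schwarz inequality on the second term: $\langle u,v\rangle\ge-\|u\|\|v\|$, so that $0=\langle x,y\rangle+\langle u,v\rangle\ge\langle x,y\rangle-\|u\|\|v\|$. Combining this with the componentwise bound $\langle x,y\rangle\ge\|u\|\langle e,y\rangle=\|u\|\,\langle y,e\rangle$ should squeeze everything into a chain of inequalities that must collapse to equalities. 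Extracting the equality case of Cauchy--Schwarz (with $u,v\ne 0$) yields that $v$ is a negative multiple of $u$, i.e. $v=-\lambda u$ for some $\lambda>0$, which is exactly the first claimed condition.

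Once $v=-\lambda u$ is established, I would substitute back to identify $\lambda$ and the remaining relations. The equality in the componentwise bound $\langle x,y\rangle=\|u\|\langle y,e\rangle$ together with $x\ge\|u\|e$ and $y\ge 0$ should force $\langle x-\|u\|e,\,y\rangle=0$; combined with $x-\|u\|e\ge 0$ and $y\ge 0$ this is precisely the statement $(x-\|u\|e,y)\in C({\mathbb R}^p_+)$. Meanwhile the collapsed Cauchy--Schwarz equality $\langle u,v\rangle=-\|u\|\|v\|$ fed back into $\langle x,y\rangle+\langle u,v\rangle=0$, using $\langle x,y\rangle=\|u\|\langle y,e\rangle$, should give $\|u\|\langle y,e\rangle=\|u\|\|v\|$, hence $\langle y,e\rangle=\|v\|$ after dividing by $\|u\|\ne 0$. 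The converse direction is then a direct verification: assuming the three listed conditions, I would check membership in $L$, membership in $L^*$ (using the characterization of $M=L^*$ from Definition~\ref{def-ext-lor}), and reassemble the inner product to confirm it vanishes.

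The main obstacle I anticipate is pinning down the dual-cone membership $(y,v)\in L^*$ precisely enough to justify $y\ge 0$ at the start, and ensuring the inequality chain is genuinely forced to be a chain of equalities rather than merely consistent. Since the paper identifies $L^*=M=\{(y,v):\langle y,e\rangle\ge\|v\|,\ y\ge 0\}$, I can read off $y\ge 0$ directly, which unblocks the Cauchy--Schwarz argument; the delicate point is verifying that $\|u\|>0$ and $\|v\|>0$ (guaranteed by the hypothesis $u,v\ne 0$) legitimately allow division and the strict sign $\lambda>0$, and that no degenerate sub-case slips through when the squeeze forces simultaneous equalities.
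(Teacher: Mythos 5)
Your proposal is correct and follows essentially the same route as the paper: the forward implication by direct computation, and the converse by assembling the chain $0=\langle x,y\rangle+\langle u,v\rangle\ge\|u\|\langle e,y\rangle+\langle u,v\rangle\ge\|u\|\|v\|+\langle u,v\rangle\ge0$ (yours orders the Cauchy--Schwarz step first, but it is the same three inequalities), forcing all links to be equalities and extracting $v=-\lambda u$ with $\lambda>0$, $\langle y,e\rangle=\|v\|$, and $(x-\|u\|e,y)\in C({\mathbb R}^p_+)$. The points you flag as delicate (reading $y\ge0$ and $\langle y,e\rangle\ge\|v\|$ off $M=L^*$, and dividing by $\|u\|\ne0$) are handled exactly as you anticipate, so there is no gap.
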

\begin{proof}
	Suppose first that there exists $\lambda>0$ such that $v=-\lambda u$, $\langle y,e\rangle=\|v\|$ and $(x-\|u\|e,y)\in C({\mathbb R}^p_+)$. Hence,
	$(x,u)\in L$ and $(y,v)\in M$. Moreover, 
	\[\langle (x,u),(y,v)\rangle=\langle x,y\rangle+\langle u,v\rangle=\|u\|\langle e,y\rangle-\lambda\|u\|^2=\|u\|\|v\|-\lambda\|u\|^2=0.\] Thus, $(x,u,y,v)\in
	C(L)$. Conversely, suppose that $(x,u,y,v)\in C(L)$. Then, $(x,u)\in L$, $(y,v)\in M$ and
	\[0=\langle (x,u),(y,v)\rangle=\langle x,y\rangle+\langle u,v\rangle\ge\langle\|u\|e,y\rangle+\langle u,v\rangle\ge\|u\|\|v\|+\langle u,v\rangle\ge0.\]
	Hence, there exists $\lambda>0$ such that $v=-\lambda u$, $\langle e,y\rangle=\|v\|$ and $\langle x-\|u\|e,y\rangle=0$. It follows that
	$(x-\|u\|e,y)\in C({\mathbb R}^p_+)$. \qed
\end{proof}

Before presenting the main result of this section we introduce a piecewise linear function and establish  some important properties of it. This
function will play an important role in the sequel, namely,  the   formulas for the projection will depend on its single positive zero. The piecewise linear function $ \psi\colon[0, +\infty) \to {\mathbb R}$ is defined by 
\begin{equation}\label{eq:psy}
				\psi(\lambda):=-\lambda \|w\|+\left\langle e,[(\lambda+1)z-\|w\|e]^-\right\rangle.
\end{equation}
For stating   the next proposition we need to define the following diagonal matrix, which we will see  is related to   the  subdifferential $\partial \psi$  of $\psi$:
\begin{equation}\label{def:N(x)}
				N(\lambda):=\mbox{diag}\left(-\mbox{sgn}\left( [(\lambda+1)z-\|w\|e]^-\right)\right),  \qquad \lambda\in  [0, +\infty).
			\end{equation}
\begin{proposition}\label{pr:psy}
The function  $\psi$ is convex. Moreover,  if 
$$
z^+\not\ge\|w\|e, \qquad \langle z^-,e\rangle<\|w\|, 
$$
then we have:
 \begin{enumerate}
  \item $-\|w\|+\left\langle e, N(\lambda)z\right\rangle \in \partial \psi(\lambda)$ and  $-\|w\|+\left\langle e, N(\lambda)z\right\rangle <0$,  for all
	  $\lambda \geq 0$;
 \item $\psi$ has a unique zero $\lambda_* >0$.
 \end{enumerate}
\end{proposition}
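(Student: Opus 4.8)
The plan is to treat $\psi$ componentwise. Writing $\psi(\lambda)=-\lambda\|w\|+\sum_{i=1}^{p}g_i(\lambda)$ with $g_i(\lambda):=[(\lambda+1)z_i-\|w\|]^-$, each $g_i$ is the composition of the convex map $t\mapsto t^-$ with the affine map $\lambda\mapsto(\lambda+1)z_i-\|w\|$, hence convex; summing and adding the linear term $-\lambda\|w\|$ preserves convexity, which settles the first assertion. For the subdifferential membership in (1) I would compute $\partial g_i$. Setting $a_i(\lambda):=(\lambda+1)z_i-\|w\|$, wherever $a_i(\lambda)\neq0$ the function $g_i$ is differentiable with $g_i'(\lambda)=-\sgn\big(a_i(\lambda)^-\big)z_i$, which is exactly the $i$-th entry of $N(\lambda)z$; at a point with $a_i(\lambda)=0$ one has $\partial g_i(\lambda)=z_i[-1,0]$, an interval containing $0$, and $0$ is again the $i$-th entry of $N(\lambda)z$ because $\sgn(0)=0$. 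Thus $(N(\lambda)z)_i\in\partial g_i(\lambda)$ for every $i$ and every $\lambda$, and the sum rule for subdifferentials yields $-\|w\|+\langle e,N(\lambda)z\rangle\in\partial\psi(\lambda)$.

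Before the sign estimate I would record that the standing hypothesis forces $\|w\|>0$, since $0\le\langle z^-,e\rangle<\|w\|$. The key observation is that $N(\lambda)_{ii}=-1$ precisely on the index set $I(\lambda):=\{i:(\lambda+1)z_i<\|w\|\}$ and $0$ otherwise, so $\langle e,N(\lambda)z\rangle=-\sum_{i\in I(\lambda)}z_i$. For $\lambda\ge0$ every index with $z_i\le0$ lies in $I(\lambda)$ (as $(\lambda+1)z_i\le0<\|w\|$), contributing $\sum_{i:z_i<0}z_i=-\langle z^-,e\rangle$, while the indices with $z_i>0$ in $I(\lambda)$ contribute a nonnegative amount. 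Hence $\sum_{i\in I(\lambda)}z_i\ge-\langle z^-,e\rangle$ and therefore $-\|w\|+\langle e,N(\lambda)z\rangle\le-\|w\|+\langle z^-,e\rangle<0$ by hypothesis, which proves the negativity claimed in (1).

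For (2) I would first upgrade this to strict monotonicity, and here lies the step I expect to be the main obstacle: one must check that the selection $s(\lambda):=-\|w\|+\langle e,N(\lambda)z\rangle$ is not merely some subgradient but the right derivative $\psi'_+(\lambda)$, equivalently the largest element of $\partial\psi(\lambda)$, for otherwise $s(\lambda)<0$ would only give that $\psi$ is nonincreasing. The point is that kinks occur only at indices with $z_i>0$ (since $a_i(\lambda)=0$ together with $\|w\|>0$ forces $z_i>0$), where the right-hand slope of $g_i$ is $0$, exactly the value assigned by the convention $\sgn(0)=0$. Since $s(\lambda)<0$ throughout, $\psi$—being continuous and piecewise linear with each linear piece of slope $s(\lambda)<0$—is strictly decreasing on $[0,+\infty)$.

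It then remains to locate the zero. Evaluating at the endpoint gives $\psi(0)=\langle e,[z-\|w\|e]^-\rangle=\sum_{i}\max(\|w\|-z_i,0)$; the hypothesis $z^+\not\ge\|w\|e$ furnishes an index $i_0$ with $z_{i_0}^+<\|w\|$, hence $z_{i_0}<\|w\|$, so that $\psi(0)\ge\|w\|-z_{i_0}>0$. On the other hand, for $\lambda$ large enough the set $I(\lambda)$ stabilizes to $\{i:z_i\le0\}$, so $\psi$ is eventually affine with slope $-\|w\|+\langle z^-,e\rangle<0$ and thus $\psi(\lambda)\to-\infty$. Continuity together with strict monotonicity and the passage from $\psi(0)>0$ to $-\infty$ then yields a unique zero $\lambda_*$, which lies in $(0,+\infty)$ precisely because $\psi(0)>0$.
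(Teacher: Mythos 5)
Your proof is correct, and for most of the statement it follows the same route as the paper: the componentwise decomposition $\psi(\lambda)=-\lambda\|w\|+\sum_{i=1}^p[(\lambda+1)z_i-\|w\|]^-$ for convexity, the subdifferential sum rule for the inclusion in item 1, and existence of a zero via $\psi(0)>0$, $\psi(\lambda)\to-\infty$ and continuity. (Your index-set computation making the bound $\langle e,N(\lambda)z\rangle\le\langle z^-,e\rangle$ explicit, and your remark that the hypotheses force $\|w\|>0$, are details the paper leaves implicit.) The genuine divergence is in the uniqueness argument. The paper works with ``some subgradient'' only: it supposes two zeros $0<\hat\lambda<\bar\lambda$ and applies the subgradient inequality $\psi(\hat\lambda)\ge\psi(\bar\lambda)+\left[-\|w\|+\langle e,N(\bar\lambda)z\rangle\right](\hat\lambda-\bar\lambda)$, which forces $-\|w\|+\langle e,N(\bar\lambda)z\rangle\ge0$ and contradicts item 1. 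You instead upgrade the negativity to strict monotonicity by identifying $s(\lambda)=-\|w\|+\langle e,N(\lambda)z\rangle$ as the right derivative $\psi'_+(\lambda)$; this identification is correct, since the convention $\sgn(0)=0$ picks the right-hand slope at each kink and kinks occur only at indices with $z_i>0$. One further remark: the step you flag as ``the main obstacle'' is not actually an obstacle. For a convex function, a negative subgradient at \emph{every} point already implies strict decrease: if $\psi(\mu)\ge\psi(\lambda)$ with $\lambda<\mu$, then $\psi'_-(\mu)\ge[\psi(\mu)-\psi(\lambda)]/(\mu-\lambda)\ge0$, so every subgradient at $\mu$ is nonnegative, contradicting the negativity from item 1. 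This monotonicity fact is in essence what the paper's two-zero argument exploits in a localized form; so your right-derivative identification, while correct and yielding a slightly sharper structural picture (explicit strict decrease, and the exact eventual affine form of $\psi$ rather than the paper's upper bound for large $\lambda$), could be dispensed with.
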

\begin{proof}
We first note that  the function   $\psi$  can be  equivalently  given by 
\begin{equation} \label{def:edpsi}
 \psi(\lambda):=-\lambda \|w\|+\sum_{i=1}^{p} \psi_i(\lambda), \qquad    \psi_i(\lambda):= [(\lambda+1)z_i-\|w\|]^-,  \qquad \lambda \geq 0.
\end{equation}
 Since the  sum and  the maximum of two convex functions is convex, it follows that the function 
 $\psi_i(\lambda)=\max\{-(\lambda+1)z_i+\|w\|,~0\}$ is convex for all  $i=1, \ldots p$. Hence, the  result of the first part follows. 
	\begin{enumerate}
\item  The definitions  of   $ \psi$ and   $\psi_i$ in  \eqref{def:edpsi} imply that  $\partial \psi(\lambda)= -\|w\| + \sum_{i=1}^{p} \partial \psi_i(\lambda)$. Moreover,   considering that   $ \psi_i(\lambda)=\max\{-(\lambda+1)z_i+\|w\|,~0\}$, we have    $ -\mbox{sgn}\left(
	[(\lambda+1)z_i-\|w\|]^-\right)z_i\in \partial \psi_i(\lambda)$,   for all  $i=1, \ldots p$. Therefore,  using \eqref{def:N(x)},  the inclusion follows.  To prove the inequality, note that   \eqref{def:N(x)} implies that the entries of  $N(\lambda)$ are equal to $0$ or
	$-1$, for all  $\lambda \geq 0$. Thus,  from the assumption $\langle z^-,e\rangle<\|w\|$ we have $-\|w\| +\left\langle e, N(\lambda) z\right\rangle <
	0$, for all  $\lambda \geq 0$.

\item First, we  show that \eqref{eq:psy} has a positive zero. Note that $z\not\geq \|w\|e$, otherwise it would follow that $z^+=z\geq \|w\|e$, which
	contradicts our assumptions. Then, there exists $ i_0\in \{1, \ldots, p\}$ such that $z_{i_0} <\|w\|$. Hence, from
	\eqref{def:edpsi} we have $\psi(0)> \|w\|-  z_{i_0}> 0$.   If $\lambda>0$ is sufficiently
			large, then $\mbox{sgn}[(\lambda+1)z_i-\|w\|]= \mbox{sgn} z_i$ and consequently 
			$[(\lambda+1)z_i-\|w\|]^-\leq (\lambda+1)z_i^-+\|w\|$. By using the  last  inequality, \eqref{def:edpsi} and the
			assumption $\langle z^-,e\rangle<\|w\|$,  we conclude that   for $\lambda>0$  sufficiently  large,   it is true that
			 \begin{multline}
			 \psi(\lambda)\leq -\lambda\|w\|+\left\langle e,(\lambda+1)z^-+\|w\|e\right\rangle= \\\left[-\|w\|+\left\langle
			z^-,e\right\rangle\right]\lambda+ \|w\| +\left\langle e,z^-\right\rangle<0.
			\end{multline}
			Since $\psi$ is continuous, there is a $\lambda_*>0$ such that $\psi(\lambda_*)=0$.  By contradiction we assume that $\psi$  has two positive zeroes  $\bar{\lambda}$ and  
			$\hat{\lambda}$.  Let $0<\hat{\lambda}< \bar{\lambda}$. Since $\psi$ is convex and 
			$-\|w\|+ \left\langle e, N(\lambda)z\right\rangle \in \partial \psi(\lambda)$, 
			we have $ \psi(\hat{\lambda})\geq   \psi(\bar{\lambda}) +[-\|w\|+ \left\langle e,
			N(\bar{\lambda})z\right\rangle][\hat{\lambda}-\bar{\lambda}] $. Due to  $\psi(\hat{\lambda})=\psi(\bar{\lambda})=0$ and
			considering that $0<\hat{\lambda}<\bar{\lambda}$, the last inequity implies that $- \|w\|+ \left\langle e,
			N(\lambda)z\right\rangle\geq 0$,  which contradicts the second part of item 1. Therefore, $\psi$ has a unique positive zero. \qed
	\end{enumerate} 
\end{proof}
Now we ready to state and prove the main result of the paper.
\begin{theorem}\label{th:pelc}
	Let $(z,w)\in{\mathbb R}^p\times{\mathbb R}^q$. Then, we have
	\begin{enumerate}
		\item If $z^+\ge\|w\|e$, 
			then $P_L(z,w)=(z^+,w)$ and $P_M(-z,-w)=(z^-,0)$.
		\item If $\langle z^-,e\rangle\ge\|w\|$, 
			then $P_L(z,w)=(z^+,0)$ and $P_M(-z,-w)=(z^-,-w)$.
		\item If $z^+\not\ge\|w\|e$ and $\langle z^-,e\rangle<\|w\|$,
			then the piecewise linear equation
			\begin{equation}\label{ess}
				\lambda \|w\|=\left\langle e,[(\lambda+1)z-\|w\|e]^-\right\rangle.
			\end{equation}
			has a unique positive solution $\lambda>0$,
			\begin{equation}\label{epl}
				P_L(z,w)=\left(\left[z-\frac{1}{\lambda +1}\|w\|e\right]^++\frac{1}{\lambda+1}\|w\|e,~\frac{1}{\lambda+1}w\right)
			\end{equation}
			and
			\begin{equation}\label{epm}
				P_M(-z,-w)=\left(\left[z-\frac{1}{\lambda +1}\|w\|e\right]^-,~-\frac{\lambda}{\lambda+1}w\right)
			\end{equation}
	\end{enumerate}
\end{theorem}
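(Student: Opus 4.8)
The plan is to verify, in each of the three cases, that the claimed pair of points is the Moreau decomposition of $(z,w)$ relative to the mutually dual cones $L$ and $M=L^*$. By Theorem~\ref{th:mt} it suffices to exhibit $(x,u)\in L$ and $(y,v)\in M$ satisfying the decomposition $(z,w)=(x,u)-(y,v)$ together with the complementarity $((x,u),(y,v))\in C(L)$; once this is checked, Moreau's theorem forces $P_L(z,w)=(x,u)$ and $P_M(-z,-w)=(y,v)$. In every case the decomposition identity will reduce to the elementary facts $z=z^+-z^-$ (applied to a suitably shifted vector) and $[a]^+-[a]^-=a$.

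For items 1 and 2 the candidate pairs have one of the last components equal to zero, so I would argue directly rather than through Proposition~\ref{pm}. In item 1, taking $(x,u)=(z^+,w)$ and $(y,v)=(z^-,0)$, the cone memberships are immediate ($z^+\ge\|w\|e$ is the case hypothesis, and $(z^-,0)\in M$ since $z^-\ge0$), while orthogonality follows from $\langle z^+,z^-\rangle=0$ together with the second component contributing $\langle w,0\rangle=0$. Item 2 is symmetric, with $(x,u)=(z^+,0)$ and $(y,v)=(z^-,-w)$, where now $(z^-,-w)\in M$ uses exactly the hypothesis $\langle z^-,e\rangle\ge\|w\|$.

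Item 3 is the substantive case, and here I would lean on Proposition~\ref{pm}. First I would observe that the hypotheses force $\|w\|>0$ (since $\langle z^-,e\rangle\ge0$ always holds), hence $w\ne0$, and that the unique positive zero $\lambda$ of $\psi$ furnished by Proposition~\ref{pr:psy} is precisely the positive solution of \eqref{ess}. Setting $u=\frac{1}{\lambda+1}w$, $v=-\frac{\lambda}{\lambda+1}w$, $x=[z-\frac{1}{\lambda+1}\|w\|e]^++\frac{1}{\lambda+1}\|w\|e$ and $y=[z-\frac{1}{\lambda+1}\|w\|e]^-$, I would check the decomposition $u-v=w$ and $x-y=z$; the relation $v=-\lambda u$; the memberships $(x,u)\in L$ (because $x\ge\frac{1}{\lambda+1}\|w\|e=\|u\|e$) and $y\ge0$; and that $(x-\|u\|e,y)=([a]^+,[a]^-)\in C({\mathbb R}^p_+)$ with $a=z-\frac{1}{\lambda+1}\|w\|e$.

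The key computation, and the step I expect to be the crux, is matching the remaining Proposition~\ref{pm} condition $\langle y,e\rangle=\|v\|$ to equation \eqref{ess}. The bridge is the positive homogeneity of the negative-part operator, namely $[(\lambda+1)z-\|w\|e]^-=(\lambda+1)[z-\frac{1}{\lambda+1}\|w\|e]^-$, which turns \eqref{ess} into $\lambda\|w\|=(\lambda+1)\langle y,e\rangle$, i.e. $\langle y,e\rangle=\frac{\lambda}{\lambda+1}\|w\|=\|v\|$. With $u,v\ne0$ secured, Proposition~\ref{pm} then yields $((x,u),(y,v))\in C(L)$, and Theorem~\ref{th:mt} closes the argument. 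Existence and uniqueness of $\lambda$ pose no obstacle, being already delivered by Proposition~\ref{pr:psy}.
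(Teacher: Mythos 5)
Your proof is correct, and although it runs on the same machinery as the paper's (Moreau's theorem, Proposition~\ref{pm}, and the decomposition $a=[a]^+-[a]^-$), it takes the logic in the opposite direction, which is a genuine difference. The paper proves each item as an \emph{equivalence}: item 1 is shown to be exactly the case $v=0$, item 2 exactly the case $u=0$, and therefore item 3 exactly the case $u\ne0$, $v\ne0$; in item 3 the formulas \eqref{epl}--\eqref{epm} and equation \eqref{ess} are then \emph{derived} as necessary consequences of Proposition~\ref{pm} combined with Moreau's theorem for ${\mathbb R}^p_+$, with existence of $\lambda$ coming from the existence of the projections and uniqueness of $\lambda$ from the uniqueness of the projections --- Proposition~\ref{pr:psy} is never invoked in the paper's proof of this theorem. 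You instead argue purely in the sufficiency direction: in each case you exhibit the candidate pair and verify decomposition, cone membership and complementarity, and in item 3 you import existence and uniqueness of $\lambda$ from Proposition~\ref{pr:psy} and check, via the homogeneity identity $[(\lambda+1)z-\|w\|e]^-=(\lambda+1)\left[z-\frac{1}{\lambda+1}\|w\|e\right]^-$, that the hypotheses of Proposition~\ref{pm} are met. Your route is shorter, needs no converse implications in items 1 and 2, and grounds the uniqueness of the solution of \eqref{ess} explicitly in the convexity analysis of $\psi$; notably, your verification step is precisely what is implicitly required to make the paper's terse claim that ``uniqueness of $\lambda$ follows from the uniqueness of $P_L(z,w)$ and $P_M(z,w)$'' rigorous, since one must first know that \emph{any} positive solution of \eqref{ess} yields a valid Moreau decomposition before uniqueness of the projection (together with $w\ne0$) can pin down $\lambda$. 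What the paper's route buys in exchange is that the formulas are shown to be forced rather than merely valid, the three items are classified intrinsically by the vanishing of $u$ and $v$, and the theorem stays independent of Proposition~\ref{pr:psy}, which the paper then reserves for the numerical section.
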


\begin{proof}
	We will use Moreau's decomposition theorem for $L$ for proving all three items. In this case this theorem states that, $P_L(z,w)=(x,u)$ 
	and $P_M(-z,-w)=(y,v)$ if and only if $(z,w)=(x,u)-(y,v)$ and $(x,u,y,v)\in C(L)$. 
	\begin{enumerate}
		\item This is exactly the case when $v=0$. 

			Indeed, $v=0$ implies $P_L(z,w)=(x,u)$ and $P_M(-z,-w)=(y,0)$. Hence, $z=x-y$, $w=u$,
			$x\ge\|u\|e$, $y\ge0$ and $\langle x,y\rangle=0$. By using Moreau's decomposition theorem for ${\mathbb R}^p_+$, we have that $z=x-y$,
			$x\ge0$, $y\ge0$ and $\langle x,y\rangle=0$ implies $x=z^+$ and $y=z^-$. Since, $w=u$ and $x\ge\|u\|e$, we get
			$z^+\ge\|w\|e$.  

			Conversely, suppose that $z^+\ge\|w\|e$. Then  $(z^+,w,z^-,0)\in C(L)$. Hence, by Moreau's
			decomposition Theorem for $L$, we get $P_L(z,w)=(z^+,w)$ and $P_M(-z,-w)=(z^-,0)$. Thus, $v=0$. 
		\item This is exactly the case when $u=0$. 

			Indeed, $u=0$ implies $P_L(z,w)=(x,0)$ and $P_M(-z,-w)=(y,v)$. Hence, $z=x-y$, $w=-v$, $x\ge0$,
			$\langle y,e\rangle\ge\|v\|$, $y\ge0$ and $\langle x,y\rangle=0$. By using Moreau's decomposition theorem for ${\mathbb R}^p_+$, we have that $z=x-y$,
			$x\ge0$, $y\ge0$ and $\langle x,y\rangle=0$ implies $x=z^+$ and $y=z^-$. Since $w=-v$ and $\langle y,e\rangle\ge\|v\|$, we get
			$\langle z^-,e\rangle\ge\|w\|$. 

			Conversely, suppose that $\langle z^-,e\rangle\ge\|w\|$. Then, it is easy to check that $(z^+,0,z^-,-w)\in C(L)$. Then, by Moreau's
			decomposition Theorem for $L$, we get $P_L(z,w)=(z^+,0)$ and $P_M(-z,-w)=(z^-,-w)$. Thus, $u=0$. 
		\item This is exactly the case when $u\ne 0$ and $v\ne 0$. 

			From Proposition \ref{pm} it follows that $(z,w)=(x,u)-(y,v)$ and $(x,u,y,v)\in C(L)$ is equivalent to $z=x-y$, $w=u-v$ and
			the existence of a $\lambda>0$ such that $v=-\lambda u$, $\langle y,e\rangle=\|v\|$ and $(x-\|u\|e,y)\in C({\mathbb R}^p_+)$. On the
			other hand, by Moreau's decomposition theorem for ${\mathbb R}^p_+$, $(x-\|u\|e,y)\in C({\mathbb R}^p_+)$ is equivalent
			to $x-\|u\|e=[x-\|u\|e-y]^+$ and $y=[x-\|u\|e-y]^-$. Hence, 
			\begin{equation}\label{epli}
				P_L(z,w)=\left(x,\frac{1}{\lambda+1}w\right)
			\end{equation}
			and 
			\begin{equation}\label{epmi}
				P_M(-z,-w)=\left(y,-\frac{\lambda}{\lambda+1}w\right)
			\end{equation} 
			if and only if $z=x-y$ and $\lambda>0$ is such that 
			\begin{equation}\label{ebm}
				\langle y,e\rangle=\frac{\lambda}{\lambda+1}\|w\|,
			\end{equation} 
			\begin{equation}\label{ex}
				x=\left[z-\frac{1}{\lambda+1}\|w\|e\right]^++\frac{1}{1+\lambda}\|w\|e
			\end{equation} 
			and 
			\begin{equation}\label{ey}
				y=\left[z-\frac{1}{\lambda+1}\|w\|e\right]^-.
			\end{equation}
			From equations \eqref{epli} and \eqref{ex} follows equation \eqref{epl} and from equations \eqref{epmi} and \eqref{ey}
			follows equation \eqref{epm}, where $\lambda>0$ is given by equation \eqref{ess}, which is a combination of equations
			\eqref{ebm} and \eqref{ey}. The uniqueness of $\lambda>0$ which satisfies \eqref{ess} follows from the uniqueness of $P_L(z,w)$ and $P_M(z,w)$. \qed
	\end{enumerate} 
\end{proof}
The next remark will recover the well known formulas for projecting onto the second order cone (see for example \cite[Proposition 3.3]{MR1885570}).
\begin{remark} \label{eq:fplc}
Let $(z,w) \in {\mathbb R}\times{\mathbb R}^q$   and \(L\) be  the second order cone. Then,  letting $u:= [z- \|w\|]^+$ and $v:= [z+ \|w\|]^+$  we  conclude that Theorem~\ref{th:pelc} implies that
 \begin{equation} \label{eq:projef}
P_{L}(z,w)= \begin{cases}
  \frac{1}{2} \left( u + v ,\, \left[v -u\right] \displaystyle \frac {w}{\|w\|}\right), & w \neq 0, \\
\\
 \ \left(z^+,\,  0\right), & w= 0.
\end{cases}
\end{equation}
Indeed, for $p=1$,  the conditions in item 3 in Theorem~\eqref{th:pelc} hold if and only if $0\leq |z|< \|w\|$
and equation \eqref{ess} becomes $\lambda \|w\|= [(\lambda+1)z-\|w\|]^-$, which obviously can have only nonnegative solutions, because the 
right hand side of the equation is  nonnegative. Moreover, $\lambda=0$ cannot be a solution  because that would imply $|z|-\|w\|\ge
z-\|w\|>0$. Hence, the conditions in item 3 hold if and only if \eqref{ess} becomes $\lambda\|w\|=(\|w\|-(\lambda+1)z)$. This latter equation has 
the unique positive solution
\begin{equation}\label{eq:lambda}
 	\lambda=\frac{\|w\|-z}{\|w\|+z}.
\end{equation}
By using equation \eqref{epl} and \eqref{eq:lambda}, it is just a matter of algebraic manipulations to check that \eqref{eq:projef} holds for
this case. The cases described by items 1 and 2 can be similarly checked.
\end{remark}
\section{Numerical methods for projecting} 
In this section we  present three well known  numerical methods to find the unique zero of the   piecewise linear equation \eqref{ess}, in order to project  onto the extended second order cones. We note that    $(z,w)\in{\mathbb R}^p\times{\mathbb R}^q$ satisfies the  two 
conditions in item 3 of Theorem~\ref{th:pelc} if and only if 
\begin{equation} \label{eq: item3}
\exists ~ i_0\in \{1, \ldots, p\};  \quad  0\leq z_{i_0}^+<\|w\|,  \qquad 0\leq  \sum_{i=1}^p z_i^{-} < \|w\|. \qquad 
\end{equation}
{\it Throughout this section we will assume that $(z,w)\in{\mathbb R}^p\times{\mathbb R}^q$ satisfies \eqref{eq: item3}}.
\subsection{Semi-smooth Newton method}
In order to  study  \eqref{ess},  we consider the  piecewise linear function  $\psi$ defined by \eqref{eq:psy}. It follows from
Proposition~\ref{pr:psy} that   $\psi$ is  convex  and its unique zero,  namely   $\lambda_*>0$,  is the solution of \eqref{ess}.    The {\it semi-smooth Newton method}   for  finding the zero of  $\psi$,  with a starting point   $\lambda_{0}\in (0, +\infty)$, it  is formally  defined by 
\begin{equation} \label{eq:nmg}
\psi(\lambda_k)+ s_k\left(\lambda_{k+1}-\lambda_{k}\right)=0,  \qquad   s_k \in \partial \psi(\lambda_k), \qquad k=0,1,\ldots,
\end{equation}
where  $  s_k$ is any subgradient in  $ \partial \psi(\lambda_k)$.     
Let $N(\lambda)$ be defined by equation \eqref{def:N(x)}.
Item 1 of Proposition~\ref{pr:psy}   implies that  $-\|w\|+ \left\langle e, N(\lambda)z\right\rangle
\in \partial \psi(\lambda)$. Since $N(\lambda) [(\lambda+1)z-\|w\|e]= [(\lambda+1)z-\|w\|e]^-$,  by setting   $s_k= -\|w\| + \left\langle e,
N_kz\right\rangle$  with 
\begin{equation} \label{def:Nk}
N_k:= N(\lambda_k), 
\end{equation} 
 equation \eqref{eq:nmg} implies   
 $$
 -\lambda_k \|w\|+\langle e, N_k \left[(\lambda_k+1)z-\|w\|e\right]\rangle+ \left[ -\|w\| + \left\langle e, N_kz\right\rangle\right]\left[\lambda_{k+1}-\lambda_{k}\right]=0.
 $$
 After simplification, we get
\begin{equation} \label{eq:nm}
\left[- \|w\| +\left\langle e, N_kz\right\rangle\right] \lambda_{k+1}= - \left\langle e, N_k\left[z-\|w\|e\right]\right\rangle, \qquad k=0,1,\ldots,
\end{equation}
which  formally defines the {\it semi-smooth Newton  sequence} $\{\lambda_{k}\}$    for solving  \eqref{ess}.  
 \begin{remark}
 For $p=1$,  the conditions in \eqref{eq: item3} hold if and only if $0\leq |z|< \|w\|$. Thus, if $z\leq 0$, then  $N_k\equiv -1$
 and $\lambda_{k+1}=[\|w\|-z]/[\|w\|+z]$ for  all $k=0,1,\ldots$. Now, if $z > 0$ then letting $ 0<\lambda_0 < [\|w\|-z]/z$, we have   $N_0\equiv
 -1$ and $\lambda_{1}=[\|w\|-z]/[\|w\|+z]$. Therefore, from Remark~\ref{eq:fplc}, we conclude that the semi-smooth Newton sequence \eqref{eq:nm}
 solves equation  \eqref{ess} for $p=1$ with only one iteration.
 \end{remark}
 The proof of the next proposition is based on ideas similar to some arguments in \cite{MR3464994}.
\begin{proposition}\label{teo-finite}
 For any $\lambda_0>0$  the sequence $\{\lambda_k\}$ defined in \eqref{eq:nm} is well defined and converges after at most $2^p$ steps to the unique solution $\lambda_*>0$ of \eqref{ess}.
\end{proposition}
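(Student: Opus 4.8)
The plan is to exploit the convexity of $\psi$ (Proposition~\ref{pr:psy}) together with its piecewise-affine structure. On any region where the diagonal matrix $N(\lambda)$ from \eqref{def:N(x)} is constant, $\psi$ coincides with a single affine function, and the Newton update \eqref{eq:nm} is exactly the computation of the zero of that affine piece. Since every such matrix has diagonal entries in $\{0,-1\}$, there are at most $2^p$ of them, so the whole statement reduces to showing that no matrix $N_k$ is repeated before the iteration reaches $\lambda_*$. Throughout I will write $S_k$ for the index set where $(N_k)_{ii}=-1$ and record that, on the corresponding piece, $\psi$ equals $g_k(\lambda):=-\lambda\|w\|+\sum_{i\in S_k}\left[\|w\|-(\lambda+1)z_i\right]$, whose zero is the Newton iterate $\lambda_{k+1}$.

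First I would settle well-definedness and positivity of the iterates. By item~1 of Proposition~\ref{pr:psy} the coefficient $s_k=-\|w\|+\langle e,N_kz\rangle$ is strictly negative for every $\lambda_k\geq 0$, so \eqref{eq:nm} determines $\lambda_{k+1}$ uniquely. Computing the zero of the affine piece gives $\lambda_{k+1}=\left(|S_k|\,\|w\|-\sum_{i\in S_k}z_i\right)/\left(\|w\|+\sum_{i\in S_k}z_i\right)$; the defining condition $(\lambda_k+1)z_i<\|w\|$ for $i\in S_k$ forces $z_i<\|w\|$, so the numerator is positive, while $s_k<0$ makes the denominator positive, and hence $\lambda_{k+1}>0$. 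Thus all iterates remain in the domain $(0,+\infty)$ on which $\psi$ and $N(\cdot)$ are defined.

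Next I would extract the monotone behaviour from convexity. Using the subgradient inequality $\psi(\lambda)\geq\psi(\lambda_k)+s_k(\lambda-\lambda_k)$ evaluated at $\lambda_*$, together with the fact that $\psi$ is strictly decreasing (its slopes are always negative by item~1 of Proposition~\ref{pr:psy}), I would show: if $\lambda_k\leq\lambda_*$ then $\lambda_k\leq\lambda_{k+1}\leq\lambda_*$, with strict increase unless $\lambda_k=\lambda_*$; and if the initial point satisfies $\lambda_0>\lambda_*$, then a single step already lands in $(0,\lambda_*]$. Consequently, after at most the first step the sequence is strictly increasing and bounded above by $\lambda_*$ until it reaches it.

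The crux, and the step I expect to be the main obstacle, is finite termination. Because $\lambda_{k+1}$ is the zero of $g_k$ and therefore depends only on $N_k$, any coincidence $N_j=N_k$ with $j<k$ (both before convergence) would force $\lambda_{j+1}=\lambda_{k+1}$; but strict monotonicity gives $\lambda_{j+1}\leq\lambda_k<\lambda_{k+1}$, a contradiction, and a short separate check comparing the nested active sets rules out the possible initial right-hand step sharing its matrix with a later one. Hence $N_0,N_1,\ldots$ are pairwise distinct until $\lambda_*$ is attained, and since there are at most $2^p$ admissible matrices, the iteration must reach $\lambda_*$ in at most $2^p$ steps. The delicate points will be making this non-repetition argument fully rigorous and confirming that the first step from $\lambda_0>\lambda_*$ genuinely lands inside $(0,\lambda_*]$ rather than leaving the domain.
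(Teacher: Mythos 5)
Your proposal is correct and follows essentially the same route as the paper's proof: well-definedness from the strictly negative subgradient $-\|w\|+\langle e,N_k z\rangle$ (item 1 of Proposition~\ref{pr:psy}), the bounds $0<\lambda_k\le\lambda_{k+1}\le\lambda_*$ from the convexity/subgradient inequality, and finite termination from the observation that the Newton update depends only on $N_k$, which admits at most $2^p$ configurations. The only difference is cosmetic: you phrase the counting step contrapositively (no matrix $N_k$ can repeat before $\lambda_*$ is reached), whereas the paper argues by pigeonhole that a repeated configuration forces the iterates to stall, which by \eqref{eq:nmg2} means $\psi=0$; the two arguments are logically equivalent, and your version simply dispenses with the paper's (redundant for finite termination) limit argument.
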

\begin{proof} Proposition~\ref{pr:psy} implies that $\psi$      is  convex  and  $-\|w\|+ \left\langle e, N(\lambda)z\right\rangle \in \partial \psi(\lambda)$. Thus,  we have
\begin{equation} \label{eq-chave}
 \psi(\mu )-   \psi(\lambda) -[-\|w\|+ \left\langle e, N(\lambda)z\right\rangle](\mu-\lambda) \geq 0, \qquad  \mu, \lambda \in [0, +\infty).
\end{equation}
On the other hand,  it follows from  \eqref{eq:nmg} and \eqref{def:Nk} that the sequence  $\{\lambda_k\}$  is equivalently defined as follows   
\begin{equation} \label{eq:nmg2}
\psi(\lambda_k)+ \left[- \|w\| +\left\langle e, N_kz\right\rangle\right]\left(\lambda_{k+1}-\lambda_{k}\right)=0,  \qquad   k=0,1,\ldots.
\end{equation}
By combining the above equality with the definition in   \eqref{def:Nk} and the equality in  \eqref{eq-chave}, we can   conclude that 
\begin{equation}\label{Fnegativa}
\psi(\lambda_{k+1} )\geq  \psi(\lambda_{k}) +[-\|w\| + \left\langle e, N_{k}z\right\rangle](\lambda_{k+1}-\lambda_{k})=0,  \qquad \, k=0,1,\ldots.
\end{equation}
 By letting $\mu=\lambda^{*}$ and $\lambda=\lambda_k$ in inequality \eqref{eq-chave} and by using again the definition in   \eqref{def:Nk}, we  obtain that 
\begin{equation}\label{13insolution}
0=\psi(\lambda_* )\geq   \psi(\lambda_k) + [-\|w\|+ \left\langle e, N_kz\right\rangle](\lambda_*-\lambda_k),  \qquad \, k=0,1,\ldots.
\end{equation} 
Proposition~\ref{pr:psy}  implies that  $-\|w\| +\left\langle e, N_kz\right\rangle <0$, for all $k=0,1,\ldots$. Then,  by dividing both sides of 
\eqref{13insolution} by $-\|w\| +\left\langle e, N_kz\right\rangle$ and by using \eqref{eq:nmg2}, after some algebras we obtain
\begin{equation}\label{I+}
\lambda_{k+1}= \lambda_k-[-\|w\| +\left\langle e, N_kz\right\rangle]^{-1}\psi(\lambda_k)\le \lambda_*,  \qquad \, k=0, 1, \ldots.
\end{equation} 
On the other hand, $\psi(\lambda_{k} )\geq 0 $, for all $k=0,1,\ldots$. Thus, after dividing both sides of the equality in 
\eqref{Fnegativa} by $\|w\| -\left\langle e, N_kz\right\rangle$ and some algebraic manipulations, we conclude 
\begin{equation}\label{increasing}
0<\lambda_{k}\le \lambda_k-[-\|w\| +\left\langle e, N_kz\right\rangle]^{-1}\psi(\lambda_k)= \lambda_{k+1},  \qquad \, k=0, 1, \ldots.
\end{equation} 
Hence, by combining  \eqref{I+} with \eqref{increasing}, we conclude that $ 0<\lambda_{k}\le  \lambda_{k+1}\le \lambda_*$ , for all  $k=0,1,\ldots$. Hence, $\{\lambda_{k}\}$ converges to some  $\bar{\lambda}>0$. By using again \eqref{eq:nmg2} and that  the entries of  $N_k$ are equal to $0$ or
$-1$, we have
\begin{eqnarray*}
|\psi(\bar{\lambda})|=\lim_{k\to\infty}|\psi(\lambda_k)|=\lim_{k\to\infty}|\left[- \|w\| +\left\langle e,
N_kz\right\rangle\right]\left(\lambda_{k+1}-\lambda_{k}\right)|\\\le \left[ \|w\| +\left\langle e,
|z|\right\rangle\right]\lim_{k\to\infty}|\lambda_{k+1}-\lambda_{k}|=0.
\end{eqnarray*} 
Hence, $\{\lambda_{k}\}$ converges   to  $\bar{\lambda}=\lambda_*$ the unique zero of $\psi$, which is the solution of \eqref{ess}.

Finally, we establish the finite  termination of the sequence $\{\lambda_{k}\}$ at $\lambda_{*}$,  the unique solution of \eqref{ess}. Since the
entries of  ${N}(\lambda)$ are equal to $0$ or $-1$, $N(\lambda)$  has at most $2^p$ different possible
configurations.    Then,  there exist $j, \ell \in {\mathbb N}$ with  $1\leq j<2^p$ and $1\leq \ell < 2^p$ such that $N(\lambda_j)=N(\lambda_{j+\ell})$.  Hence,  from  \eqref{eq:nm} we have 
\begin{eqnarray*}
\lambda_{j+1}=-\left[ -\|w\| +\left\langle e, N_jz\right\rangle\right]^{-1}\left\langle e, N_j\left[z-\|w\|e\right]\right\rangle\\=-\left[- \|w\| +\left\langle e, N_{j+\ell}z\right\rangle\right]^{-1}\left\langle e, N_{j+\ell}\left[z-\|w\|e\right]\right\rangle=\lambda_{j+\ell+1}.
\end{eqnarray*}
By applying this argument inductively,  $\lambda_{j+1}=\lambda_{j+\ell+1}$, $\lambda_{j+2}=\lambda_{j+\ell+2}$, $\ldots$,
$\lambda_{j+\ell}=\lambda_{j+2\ell}$, $\lambda_{j+\ell+1}=\lambda_{j+2\ell+1}=\lambda_{j+1}.$ Thus, by using \eqref{increasing} and the last
equality, we conclude that
$$
\lambda_{j+1}\leq  \lambda_{j+2} \leq \dots \leq \lambda_{j+\ell+1}\leq \lambda_{j+1}.
$$
Hence, $\lambda_{j+1}= \lambda_{j+2}$ and in view of \eqref{eq:nmg2} we conclude that $\psi(\lambda_{j+1})=0$ and $\lambda_{j+1}$ is the solution of
\eqref{ess}, i.e., $\lambda_{j+1}=\lambda_{*}$.  \qed
\end{proof}

The next proposition shows that under a further restriction on the point which is projected the convergence of the semi-smooth Newton sequence is linear.

\begin{proposition} 
	Assume that $0<\alpha<1$ and $\left\langle e, |z|\right\rangle<\alpha(1+\alpha)^{-1}\|w\|$. Then,  for any $\lambda_0>0$ , the sequence $\{\lambda_k\}$  in \eqref{eq:nm} is well defined and  converges linearly  to the unique solution $\lambda_*$ of \eqref{ess}:
\begin{equation} \label{eq:cr}
|\lambda_* -\lambda_{k+1}|\leq \alpha |\lambda_*-\lambda_k|, \qquad k=0,1,\ldots.
\end{equation}
\end{proposition}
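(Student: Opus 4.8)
The plan is to piggyback on everything already proved for this very sequence in Proposition~\ref{teo-finite}: for any $\lambda_0>0$ the iterates $\{\lambda_k\}$ are well defined and satisfy $0<\lambda_k\le\lambda_{k+1}\le\lambda_*$ with $\lambda_k\to\lambda_*$. In particular $\lambda_k\le\lambda_*$ for every $k$, so $|\lambda_*-\lambda_k|=\lambda_*-\lambda_k$ and the desired estimate \eqref{eq:cr} reduces to the one-sided inequality $\lambda_*-\lambda_{k+1}\le\alpha(\lambda_*-\lambda_k)$; well-definedness therefore needs no separate argument. First I would record that the standing hypothesis $\langle e,|z|\rangle<\alpha(1+\alpha)^{-1}\|w\|$ gives $\langle e,|z|\rangle<\|w\|$ (since $0<\alpha<1$), so \eqref{eq: item3} holds and Proposition~\ref{teo-finite} indeed applies; this also yields the positivity $\|w\|-\langle e,|z|\rangle>0$, which I will need as a denominator.

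Next I would rewrite \eqref{I+} as $\lambda_*-\lambda_{k+1}=(\lambda_*-\lambda_k)-\psi(\lambda_k)/|s_k|$, where $s_k:=-\|w\|+\langle e,N_kz\rangle<0$ and $|s_k|=\|w\|-\langle e,N_kz\rangle$. The target is then equivalent to $(1-\alpha)|s_k|(\lambda_*-\lambda_k)\le\psi(\lambda_k)$. To bound $\psi(\lambda_k)$ from below I would invoke the subgradient inequality at $\lambda_*$: taking $s_*:=-\|w\|+\langle e,N(\lambda_*)z\rangle\in\partial\psi(\lambda_*)$, which is a valid subgradient by item~1 of Proposition~\ref{pr:psy}, and using $\psi(\lambda_*)=0$, convexity gives $\psi(\lambda_k)\ge\psi(\lambda_*)+s_*(\lambda_k-\lambda_*)=|s_*|(\lambda_*-\lambda_k)$. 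Hence it suffices to establish the purely ``slope'' inequality $(1-\alpha)|s_k|\le|s_*|$, which, dividing by $|s_k|>0$, is the same as $(|s_k|-|s_*|)/|s_k|\le\alpha$.

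The crux — and the step I expect to be the main obstacle — is proving this slope inequality with the sharp constant $\alpha(1+\alpha)^{-1}$. The naive route of bounding $|s_*|\ge\|w\|-\langle e,|z|\rangle$ and $|s_k|\le\|w\|+\langle e,|z|\rangle$ separately only delivers the weaker requirement $\langle e,|z|\rangle\le\alpha(2-\alpha)^{-1}\|w\|$, which fails to match the hypothesis for $\alpha<1/2$. The key is instead to estimate the \emph{difference} of the two slopes: since $|s_k|-|s_*|=s_*-s_k=\langle e,(N(\lambda_*)-N_k)z\rangle$ and every diagonal entry of $N(\lambda_*)-N_k$ lies in $\{-1,0,1\}$ (the entries of each $N(\cdot)$ being $0$ or $-1$), one gets $|s_k|-|s_*|\le\langle e,|z|\rangle$. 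Combining this with $|s_k|\ge\|w\|-\langle e,|z|\rangle>0$ yields
\[
\frac{|s_k|-|s_*|}{|s_k|}\le\frac{\langle e,|z|\rangle}{\|w\|-\langle e,|z|\rangle}\le\alpha,
\]
where the last step is exactly the hypothesis rearranged as $\langle e,|z|\rangle(1+\alpha)\le\alpha\|w\|$. This gives $(1-\alpha)|s_k|\le|s_*|$, and back-substituting through the two previous paragraphs produces $\lambda_*-\lambda_{k+1}\le\alpha(\lambda_*-\lambda_k)$, i.e.\ \eqref{eq:cr}.
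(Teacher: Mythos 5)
Your core argument is sound, and it is genuinely different from the paper's. The paper never uses monotonicity of the iterates: it introduces the auxiliary convex function $\zeta(\lambda):=\langle e, N(\lambda)[(\lambda+1)z-\|w\|e]\rangle$, writes the error exactly as
$\lambda_*-\lambda_{k+1}= -s_k^{-1}\int_{0}^{1}\langle e,[N(\lambda_k+t(\lambda_*-\lambda_k))-N_k]z\rangle\,dt\,(\lambda_*-\lambda_k)$
via an integral representation of $\zeta$ (citing \cite{HiriartUrrutyLemarechal1993}), bounds the integrand by $\langle e,|z|\rangle$, and divides by $|s_k|\ge\|w\|-\langle e,|z|\rangle$. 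You replace this calculus by two subgradient inequalities (at $\lambda_k$, implicitly through \eqref{I+}, and at $\lambda_*$) combined with the ordering $\lambda_k\le\lambda_*$; your slope-difference estimate $|s_k|-|s_*|=\langle e,(N(\lambda_*)-N_k)z\rangle\le\langle e,|z|\rangle$ is precisely the discrete counterpart of the paper's bound on the integrand, and both proofs then invoke the hypothesis in the same rearranged form $\langle e,|z|\rangle/(\|w\|-\langle e,|z|\rangle)\le\alpha$. What your route buys is elementarity (no integral representation, no almost-everywhere differentiability argument); what it costs is dependence on Proposition~\ref{teo-finite} and, crucially, on the sign of $\lambda_*-\lambda_k$, whereas the paper's identity is sign-agnostic and self-contained.

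That sign-dependence is where the one genuine gap sits: the claim that $\lambda_k\le\lambda_*$ for \emph{every} $k$ is false at $k=0$, since $\lambda_0>0$ is arbitrary and may exceed $\lambda_*$. (Inequality \eqref{I+} forces $\lambda_{k+1}\le\lambda_*$, hence $\lambda_k\le\lambda_*$ only for $k\ge1$; likewise \eqref{increasing} needs $\psi(\lambda_k)\ge0$, which \eqref{Fnegativa} supplies only from index $1$ on. The proof of Proposition~\ref{teo-finite} overstates this point too, but the paper's proof of the present proposition never relies on it.) Consequently your reduction $|\lambda_*-\lambda_0|=\lambda_*-\lambda_0$, and with it \eqref{eq:cr} for $k=0$, is not covered when $\lambda_0>\lambda_*$. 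The repair is cheap and symmetric to what you already did: if $\lambda_0>\lambda_*$, the subgradient inequality at $\lambda_*$ gives $-\psi(\lambda_0)\le|s_*|(\lambda_0-\lambda_*)$, hence $\lambda_*-\lambda_1\le(\lambda_0-\lambda_*)\bigl(|s_*|-|s_0|\bigr)/|s_0|$; the mirrored difference estimate $|s_*|-|s_0|=\langle e,(N_0-N(\lambda_*))z\rangle\le\langle e,|z|\rangle$ together with $|s_0|\ge\|w\|-\langle e,|z|\rangle$ and with $\lambda_1\le\lambda_*$ from \eqref{I+} then yields $|\lambda_*-\lambda_1|\le\alpha(\lambda_0-\lambda_*)$. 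With that case added, your proof is complete.
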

\begin{proof}
Proposition~\ref{pr:psy} and \eqref{def:Nk} imply  $-\|w\| +\left\langle e, N_kz\right\rangle < 0$ for all $k=0,1,\ldots$, 
which implies that  the sequence $\{\lambda_k\}$ is well defined. Proposition~\ref{pr:psy} also implies that  \eqref{ess} has a zero  $\lambda_*\in  (0, +\infty)$. Hence,  by using \eqref{def:Nk},   \eqref{eq:nm} and  the definition of  $\psi$,  after some
algebra  we obtain that 
\begin{multline*}
\lambda_* -\lambda_{k+1}= \left[ -\|w\| +\left\langle e, N_kz\right\rangle\right]^{-1}  \big{[} \lambda_* \|w\|-\langle e, [(\lambda_*+1)z-\|w\|e]^-\rangle   \\
-\lambda_k \|w\|+\langle e, [(\lambda_k+1)z-\|w\|e]^-\rangle+ \left[ -\|w\| +\left\langle e, N_kz\right\rangle\right]\left[ \lambda_* -\lambda_{k}\right]\big{]}, 
\end{multline*}
for all $k=0,1,\ldots.$
On the other hand, since  $N(\lambda) [(\lambda+1)z-\|w\|e]= [(\lambda+1)z-\|w\|e]^-$, after some calculations  we have
\begin{multline*}
 \lambda_* \|w\|-\langle e, [(\lambda_*+1)z-\|w\|e]^-\rangle -  \\ \lambda_k \|w\|+ \langle e, [(\lambda_k+1)z-\|w\|e]^-\rangle+   \left[ -\|w\| +\left\langle e, N_kz\right\rangle\right]\left[ \lambda_* -\lambda_{k}\right]= \\
-\langle e,  N_*[(\lambda_*+1)z-\|w\|e]\rangle +\langle e, N_k[(\lambda_k+1)z-\|w\|e]\rangle + \left\langle e, N_kz\right\rangle \left[ \lambda_* -\lambda_{k}\right], 
\end{multline*}
for all $k=0,1,\ldots$, where $N_*:= N(\lambda_*)$.  By combining the above two equalities, we obtain that
\begin{multline*} \label{eq:taylor1}
\lambda_* -\lambda_{k+1}= \left[ -\|w\| +\left\langle e, N_kz\right\rangle\right]^{-1}  \big{[}- \langle e,  N_*[(\lambda_*+1)z-\|w\|e]\rangle +\\ \langle e, N_k[(\lambda_k+1)z-\|w\|e]\rangle + \left\langle e, N_kz\right\rangle \left[ \lambda_* -\lambda_{k}\right]  \big{]} .
\end{multline*}
Define the auxiliary piecewise linear   convex function $\zeta(\lambda) :=  \langle e,  N(\lambda)[(\lambda+1)z-\|w\|e]\rangle$. Thus, except possibly at $p$ 
points, $\zeta$ is differentiable and there holds
$$
\zeta(\lambda_*)= \zeta(\lambda_k) + \int_{0}^{1}  \left\langle e, N(\lambda_k + t(\lambda_*-\lambda_k))z\right\rangle[\lambda_*-\lambda_k]dt, 
$$
due to $\left\langle e, N(\lambda)z\right\rangle \in \partial \zeta (\lambda)$; see  \cite[Remark 4.2.5, pag. 26]{HiriartUrrutyLemarechal1993}. Hence, by simple combination of  the two latter equalities,  we have
\begin{multline*}
\lambda_* -\lambda_{k+1}=\\ -\left[- \|w\| +\left\langle e, N_kz\right\rangle\right]^{-1}  \int_{0}^{1}  \left\langle e, \left[N(\lambda_k +t(\lambda_*-\lambda_k))-N_k\right]z\right\rangle dt  [\lambda_*-\lambda_k],
\end{multline*}
for all $k=0,1,\ldots.$ Since \eqref{def:N(x)} implies that the entries of the matrix $N$ are equal to $0$ or $-1$, we obtain 
$$
	| \left\langle e, \left[N(\lambda_k + t(\lambda_*-\lambda_k))-N_k\right]z\right\rangle|\leq \sum_{j=1}^p|z_j|=\left\langle e, |z|\right\rangle.
$$
Thus, combining above equality with last inequality, we obtain  that 
$$
|\lambda_* -\lambda_{k+1}|\leq  |\|w\| -\left\langle e, N_kz\right\rangle|^{-1} \left\langle e, |z|\right\rangle |\lambda_*-\lambda_k|, \qquad k=0,1,\ldots.
$$ 
 Therefore, as we are under the assumption
$\left\langle e, |z|\right\rangle<\alpha(1+\alpha)^{-1}\|w\|$,  we have $  \left\langle e, |z|\right\rangle/[\|w\| -\left\langle e,N_kz\right\rangle] <\alpha<1$, 
\eqref{eq:cr} holds and  the sequence $\{\lambda_k\}$ converges to $\lambda_*$, which concludes the proof.\qed
\end{proof}
\subsection{Picard's method}
In this section we present a method based on Picard's iteration for solving equation \eqref{ess} under a further restriction on the point which
is projected. The statement of the result is as follows:
\begin{proposition} 
 If $\left\langle e, |z|\right\rangle<\|w\|$, then for all $\lambda_0>0$ the sequence given by the iteration
\begin{equation} \label{eq:pm}
\lambda_{k+1} =\frac{1}{\|w\|}\left\langle e, [(\lambda_k+1)z-\|w\|e]^-\right\rangle, \qquad k=1, \ldots, 
\end{equation}
converges to the unique solution of the semi-smooth equation \eqref{ess}.
\end{proposition}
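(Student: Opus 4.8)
The plan is to recast equation \eqref{ess} as a fixed point problem and to recognize the Picard iteration \eqref{eq:pm} as the iteration of a contraction. Define the map $T\colon[0,+\infty)\to{\mathbb R}$ by
\[
T(\lambda):=\frac{1}{\|w\|}\left\langle e,[(\lambda+1)z-\|w\|e]^-\right\rangle,
\]
so that \eqref{eq:pm} reads $\lambda_{k+1}=T(\lambda_k)$ and a point $\lambda\ge0$ solves \eqref{ess} if and only if it is a fixed point of $T$. First I would record that the hypothesis $\langle e,|z|\rangle<\|w\|$ subsumes the two conditions in item 3 of Theorem~\ref{th:pelc}: since $z_i^+\le|z_i|\le\langle e,|z|\rangle<\|w\|$ for every $i$ we get $z^+\not\ge\|w\|e$, and since $\langle z^-,e\rangle\le\langle e,|z|\rangle<\|w\|$ the second condition holds as well. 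Hence Proposition~\ref{pr:psy} already guarantees that \eqref{ess} has a unique positive zero $\lambda_*$; it remains only to show that \eqref{eq:pm} converges to it.

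The key step is the contraction estimate. I would first note that $T$ maps $[0,+\infty)$ into itself, because $[(\lambda+1)z-\|w\|e]^-\ge0$ componentwise forces $T(\lambda)\ge0$. For the Lipschitz bound I would use that the scalar map $t\mapsto t^-=\max(-t,0)$ is $1$-Lipschitz, so that for each $i$ the map $\lambda\mapsto[(\lambda+1)z_i-\|w\|]^-$ is Lipschitz with constant $|z_i|$, being the composition of $t\mapsto t^-$ with the affine map $\lambda\mapsto(\lambda+1)z_i-\|w\|$ of slope $z_i$. Summing over $i$ and using the triangle inequality then yields
\[
|T(\mu)-T(\lambda)|\le\frac{1}{\|w\|}\sum_{i=1}^p|z_i|\,|\mu-\lambda|=\frac{\langle e,|z|\rangle}{\|w\|}\,|\mu-\lambda|,\qquad\mu,\lambda\ge0.
\]
By hypothesis the constant $\alpha:=\langle e,|z|\rangle/\|w\|$ satisfies $0\le\alpha<1$, so $T$ is a contraction on $[0,+\infty)$.

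Finally I would invoke the Banach fixed point theorem. Since $[0,+\infty)$ is a complete metric space and $T$ is a contraction of it into itself, $T$ has a unique fixed point in $[0,+\infty)$ and the iterates $\lambda_{k+1}=T(\lambda_k)$ converge to it from an arbitrary starting value $\lambda_0>0$, with $|\lambda_k-\lambda_*|\le\alpha^k|\lambda_0-\lambda_*|$. That unique fixed point must coincide with the positive zero $\lambda_*$ furnished by Proposition~\ref{pr:psy}, which completes the argument. I do not expect a serious obstacle here: the only points requiring care are the verification that the hypothesis implies the setting of item 3 and the Lipschitz constant computation, both of which are elementary; the one thing worth double-checking is that $T$ indeed keeps the iterates in $[0,+\infty)$ so that the completeness of the domain can be used.
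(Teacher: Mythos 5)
Your proposal is correct and takes essentially the same approach as the paper: both recognize \eqref{eq:pm} as the iteration of the map $\lambda\mapsto\frac{1}{\|w\|}\left\langle e,[(\lambda+1)z-\|w\|e]^-\right\rangle$ and establish the contraction estimate with Lipschitz constant $\left\langle e,|z|\right\rangle/\|w\|<1$ via the componentwise $1$-Lipschitz property of $t\mapsto t^-$, then conclude by the fixed point argument. Your write-up is in fact slightly more careful than the paper's, since you explicitly verify that the map sends $[0,+\infty)$ into itself, that the hypothesis implies the conditions of item 3 of Theorem~\ref{th:pelc}, and that the Banach fixed point coincides with the zero $\lambda_*$ from Proposition~\ref{pr:psy} --- details the paper leaves implicit.
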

\begin{proof}
It is sufficient  to prove  that $\varphi \colon [0, +\infty) \to  {\mathbb R} $ defined by 
$$
\varphi(\lambda)=\frac{1}{\|w\|}\left\langle e, [(\lambda+1)z-\|w\|e]^-\right\rangle
$$
is a contraction. Indeed, the  definition of $\varphi$ implies
\begin{align*}
|\varphi(\lambda) - \varphi(\mu)|&\leq \frac{1}{\|w\|} \sum_{i=1}\left|[(\lambda+1)z_i-\|w\|]^- - [(\mu+1)z_i-\|w\|]^-\right| \\    
                                                  &\leq \frac{1}{\|w\|} \sum_{i=1}\left|z_i (\lambda -\mu)\right| =\frac{\left\langle e, |z|\right\rangle}{\|w\|}\left|\lambda -\mu\right|, \qquad \lambda,  \mu \in  [0, +\infty).                                         
\end{align*}
Since we are under the assumption $\left\langle e, |z|\right\rangle<\|w\|$, the last inequality implies that $\varphi$ is a contraction and  the result follows. \qed \end{proof} 
\section*{Final remarks}

The {\it extended  second order cones} (ESOCs) are likely the most natural extensions of the second order cones. Also,
the complementarity problems defined on them often have nice computational properties
as remarked in the introduction. Finally, we found almost closed-form formulas
for projecting onto them. The formulas depend only on a piecewise linear
equation for a real parameter. Not so much of the ESOCs is known, nevertheless,
we stipulate that they will become an important class of cones in optimization.

For a given point in the ambient space the projection can be obtained easily in at
most $2^p$ steps, by assigning signs to the components of the second vector in the
scalar product on the right hand side of the piecewise linear equation  \eqref{ess}, solving
for $\lambda$, and if there is a solution, then checking that the solution corresponds to 
the a priori assumed signs. However, this method is computationally unviable for
larger $p$. Therefore, we developed numerical methods for solving \eqref{ess} based on the
semismooth Newton method and Picards iterations. Although the semismooth
Newton method always converges in at most  $2^p$ steps, it needs some restriction
on the point which is projected to prove that is globally linearly convergent. A
similar type of restriction is needed for Picard's method to prove that it is globally
convergent.

The complexity of our projection method is considerably lower than the complexity
of solving the reformulation of the projection problem into a second order conic
optimization problem. It is expected that there are other conic optimization problems
with respect to the extended second order cone which are easier to solve than
transforming them into second order conic optimization problems. We plan to
solve conic optimization and complementarity problems on the extended second
order cone (similarly to the second order cone in  \cite{BCFNP2017}) and to find practical examples
which can be modeled by such problems. Early studies of Lianghai Xiao (PhD
student of the second author) suggest that the extended second order cones could
be useful for portfolio selection, see \cite{MR0103768,MR2925782} and signal processing problems,  see \cite{MR3618179,MR3674429}.




\begin{thebibliography}{10}


\bibitem{AG2003}
Alizadeh, F., Goldfarb, D.: Second-order cone programming.
\newblock Math. Program. \textbf{95}(1, Ser. B), 3--51 (2003).


\bibitem{MR3464994}
Barrios, J.G., Bello~Cruz, J.Y., Ferreira, O.P., N{\'e}meth, S.Z.: A
  semi-smooth {N}ewton method for a special piecewise linear system with
  application to positively constrained convex quadratic programming.
\newblock J. Comput. Appl. Math. \textbf{301}, 91--100 (2016).


\bibitem{BCFNP2017}
Bello~Cruz, J.Y., Ferreira, O.P., N\'emeth, S., Prudente, L.F.: A semi-smooth
  {N}ewton method for projection equations and linear complementarity problems
  with respect to the second order cone.
\newblock Linear Algebra Appl. \textbf{513}, 160--181 (2017)

\bibitem{MR2179239}
Chen, J.S., Tseng, P.: An unconstrained smooth minimization reformulation of
  the second-order cone complementarity problem.
\newblock Math. Program. \textbf{104}(2-3, Ser. B), 293--327 (2005).

\bibitem{MR3618179}
Chi, C.Y., Li, W.C., Lin, C.H.: Convex optimization for signal processing and communications.
\newblock CRC Press, Boca Raton, FL (2017).
\newblock From fundamentals to applications

\bibitem{MR1885570}
Fukushima, M., Luo, Z.Q., Tseng, P.: Smoothing functions for second-order-cone
  complementarity problems.
\newblock SIAM J. Optim. \textbf{12}(2), 436--460 (2001/02).

\bibitem{MR3158056}
Gajardo, P., Seeger, A.: Equilibrium problems involving the {L}orentz cone.
\newblock J. Global Optim. \textbf{58}(2), 321--340 (2014).

\bibitem{MR3674429}
Gazi, O.: Understanding digital signal processing, \emph{Springer Topics in
  Signal Processing}, vol.~13.
\newblock Springer, Singapore (2018).

\bibitem{GowdaTao2014}
Gowda, M.S., Tao, J.: On the bilinearity rank of a proper cone and
  {L}yapunov-like transformations.
\newblock Math. Program. \textbf{147}(1-2, Ser. A), 155--170 (2014).


\bibitem{GowdaTrott2014}
Gowda, M.S., Trott, D.: On the irreducibility, {L}yapunov rank, and
  automorphisms of special {B}ishop-{P}helps cones.
\newblock J. Math. Anal. Appl. \textbf{419}(1), 172--184 (2014).


\bibitem{HiriartUrrutyLemarechal1993}
Hiriart-Urruty, J.B., Lemar{\'e}chal, C.: Convex analysis and minimization
  algorithms. {I}, \emph{Grundlehren der Mathematischen Wissenschaften
  [Fundamental Principles of Mathematical Sciences]}, vol. 305.
\newblock Springer-Verlag, Berlin (1993).
\newblock Fundamentals

\bibitem{KCY2011}
Ko, C.H., Chen, J.S., Yang, C.Y.: Recurrent neural networks for solving
  second-order cone programs.
\newblock Neurocomputing \textbf{74}, 3464--3653 (2011)

\bibitem{MR2377196}
Kong, L., Xiu, N., Han, J.: The solution set structure of monotone linear
  complementarity problems over second-order cone.
\newblock Oper. Res. Lett. \textbf{36}(1), 71--76 (2008).


\bibitem{MR1655138}
Lobo, M.S., Vandenberghe, L., Boyd, S., Lebret, H.: Applications of
  second-order cone programming.
\newblock Linear Algebra Appl. \textbf{284}(1-3), 193--228 (1998).


\bibitem{MR2568432}
Luo, G.M., An, X., Xia, J.Y.: Robust optimization with applications to game
  theory.
\newblock Appl. Anal. \textbf{88}(8), 1183--1195 (2009).

\bibitem{MR2116450}
Malik, M., Mohan, S.R.: On {$\bf Q$} and {${\bf R}_0$} properties of a
  quadratic representation in linear complementarity problems over the
  second-order cone.
\newblock Linear Algebra Appl. \textbf{397}, 85--97 (2005).

\bibitem{MR0103768}
Markowitz, H.M.: Portfolio selection: {E}fficient diversification of investments.
\newblock Cowles Foundation for Research in Economics at Yale University,
 Monograph 16. John Wiley \& Sons, Inc., New York; Chapman \& Hall, Ltd., London (1959)


\bibitem{MR0139919}
Moreau, J.J.: D\'ecomposition orthogonale d'un espace hilbertien selon deux
  c\^ones mutuellement polaires.
\newblock C. R. Acad. Sci. Paris \textbf{255}, 238--240 (1962)

\bibitem{NZ2016b}
N\'emeth, S., Zhang, G.: Positive operators of {E}xtended {L}orentz cones.
\newblock \texttt{\em arXiv:1608.07455v2}  (2016)

\bibitem{NZ20151}
N{\'e}meth, S.Z., Zhang, G.: Extended {L}orentz cones and mixed complementarity
  problems.
\newblock J. Global Optim. \textbf{62}(3), 443--457 (2015).


\bibitem{NZ2016a}
N{\'e}meth, S.Z., Zhang, G.: Extended {L}orentz cones and variational
  inequalities on cylinders.
\newblock J. Optim. Theory Appl. \textbf{168}(3), 756--768 (2016).


\bibitem{MR2522815}
Nishimura, R., Hayashi, S., Fukushima, M.: Robust {N}ash equilibria in
  {$N$}-person non-cooperative games: uniqueness and reformulation.
\newblock Pac. J. Optim. \textbf{5}(2), 237--259 (2009)

\bibitem{OrlitzkyGowda2016}
Orlitzky, M., Gowda, M.S.: An improved bound for the {L}yapunov rank of a
  proper cone.
\newblock Optim. Lett. \textbf{10}(1), 11--17 (2016).


\bibitem{RudolfNoyanPappAlizadeh2011}
Rudolf, G., Noyan, N., Papp, D., Alizadeh, F.: Bilinear optimality constraints
  for the cone of positive polynomials.
\newblock Math. Program. \textbf{129}(1, Ser. B), 5--31 (2011).


\bibitem{RS2016}
Sznajder, R.: The {L}yapunov rank of extended second order cones.
\newblock Journal of Global Optimization \textbf{66}(3), 585--593 (2016)

\bibitem{Trott2014}
Trott, D.W.: Topheavy and special {B}ishop-{P}helps cones, {L}yapunov rank, and
  related topics.
\newblock ProQuest LLC, Ann Arbor, MI (2014).
\newblock Thesis (Ph.D.)--University of Maryland, Baltimore County

\bibitem{MR2925782}
Ye, K., Parpas, P., Rustem, B.: Robust portfolio optimization: a conic programming approach.
\newblock Comput. Optim. Appl. \textbf{52}(2), 463--481 (2012).


\bibitem{MR2925039}
Yonekura, K., Kanno, Y.: Second-order cone programming with warm start for
  elastoplastic analysis with von {M}ises yield criterion.
\newblock Optim. Eng. \textbf{13}(2), 181--218 (2012).


\bibitem{MR3010551}
Zhang, L.L., Li, J.Y., Zhang, H.W., Pan, S.H.: A second order cone
  complementarity approach for the numerical solution of elastoplasticity
  problems.
\newblock Comput. Mech. \textbf{51}(1), 1--18 (2013).

\end{thebibliography}

\end{document}